\newcommand{\bmat}[1]{ \begin{bmatrix} #1 \end{bmatrix} }
\DeclarePairedDelimiter{\norm}{\lVert}{\rVert}
\renewcommand{\Re}{\mathrm{Re}}
\newcommand{\tran}{{\mkern-1.5mu\mathsf{T}}}
\newcommand{\fieldsym}[1]{{\mbox{\bf #1}}}
\newcommand{\HermH}{\fieldsym{H}}
\newcommand{\HermMats}[1]{\HermH^#1}
\newcommand{\HermPSDCone}[1]{\HermH^#1_+}
\newcommand{\HermPDCone}[1]{\HermH^#1_{\small ++}}
\newcommand{\SymS}{\fieldsym{S}}
\newcommand{\SymMats}[1]{\SymS^#1}
\newcommand{\PSDCone}[1]{\SymS^#1_+}
\newcommand{\PDCone}[1]{\SymS^#1_{\small ++}}
\newcommand{\Span}{\text{span}}
\newcommand{\Pset}{\mathcal{P}}
\newcommand{\LowRank}{\mathcal{R}}
\newcommand{\Reals}{\fieldsym{R}}
\newcommand{\Complexs}{\fieldsym{C}}
\newcommand*\oline[1]{%
	\vbox{%
		\hrule height 0.8pt
		\kern0.35ex
		\hbox{%
			\kern-0.1em
			\ifmmode#1\else\ensuremath{#1}\fi
			\kern-0.1em
		}
	}
}
\newcommand{\sigmamax}{\oline{\sigma}}
\newcommand{\bDelta}{{\bm{\Delta}}}
\newcommand{\im}{\text{im}}
\newcommand{\re}{\text{re}}
\renewcommand{\qed}{\hfill $\square$}
\renewenvironment{proof}{\vspace{-0.35em}\noindent\emph{Proof.}}{\qed}
\theoremstyle{definition}
\newtheorem{theorem}{Theorem}
\newtheorem{definition}{Definition}
\newtheorem{counterexample}{Counterexample}
\newenvironment{manualtheorem}[1]{%
	\manualtheoreminner
}{\endmanualtheoreminner}
\newtheoremstyle{myremark}
{3pt}
{3pt}
{}
{}
{\bf}
{.}
{.5em}
{}
\theoremstyle{myremark}
\newtheorem*{remark*}{Remark}
\begin{document}

\title{Five-Full-Block Structured Singular Values of\\Real Matrices Equal Their Upper Bounds}%

\author{Olof Troeng%
\thanks{The author is with the Department of Automatic Control, Lund University, Sweden, and is a member of the ELLIIT Strategic Research Area. E-mail: \texttt{oloft@control.lth.se}.}
}

\maketitle


\begin{abstract}
We show that the structured singular value of a real matrix with respect to five full complex uncertainty blocks equals its convex upper bound.
This is done by formulating the equality conditions as a feasibility SDP and invoking a result on the existence of a low-rank solution.
A counterexample is given for the case of six uncertainty blocks. Known results are also revisited using the proposed approach.
\end{abstract}

\section{Introduction}
The structured singular value is the distance of a matrix to singularity with respect to a given class of perturbations \cite{Doyle1982,Packard1993}. It is used for quantifying performance and robustness of dynamical systems subject to structured uncertainty.
The structured singular value is notoriously hard to compute but an upper bound can be found by convex optimization.
For certain uncertainty structures, the convex upper bound equals the structured singular; a list of such structures is given in  \cite[Sec.~9]{Packard1993}.

In this letter we show that in the special case of real matrices, equality holds for a larger class of uncertainty structures than previously known---for up to five full complex uncertainty blocks.
The proof is based on a result by Barvinok \cite{Barvinok1995} that guarantees the existence of low-rank solutions to feasibility SDPs with few constraints.
A counterexample is given for the case of six full blocks.
We also demonstrate that many known results  \cite[Sec.~9]{Packard1993} can be proved using the proposed SDP approach.

In most applications of structured singular values the considered matrices are complex. Still, several results on real matrices were  presented in \cite{Packard1993}.
It has also been shown that the structured singular value of \emph{nonnegative} real matrices equal the convex upper bound for any number of full or repeated scalar blocks \cite{Colombino2016}.
The investigations in this letter were inspired by the (academically) interesting problem of computing the worst-case contraction factor of the Davis--Yin-splitting operator in $\Reals^2$ \cite{Ryu2018}. 

A rank-constrained SDP formulation similar to the one in this letter (but with larger matrices) was used in \cite{Meinsma1997}.

\emph{Notation:}
We denote the real Hilbert space of symmetric matrices in $\Reals^{r \times r}$ by $\SymMats{r}$ and the real Hilbert space of Hermitian matrices in $\Complexs^{r \times r}$ by $\HermMats{r}$; the standard inner product $\langle X, Y \rangle = \text{trace}(X Y)$ is assumed in both cases.
The positive semidefinite cone in $\SymMats{r}$ is denoted by $\PSDCone{r}$, the positive definite cone is denoted by $\PDCone{r}$, and the subset of $\PSDCone{r}$ with rank $\leq q$ matrices is denoted by $\LowRank_q(\PSDCone{r})$; analogous notation is used in the Hermitian case.
The $n \times n$ identity matrix is denoted by $I_n$. The largest singular value of a matrix $A$ is denoted by $\sigmamax(A)$. 

\section{Background}

\subsection{The Structured Singular Value and an Upper Bound}
Since the focus of this letter is on complex full-block uncertainty, we specialize the background to this case.

\begin{definition}[\cite{Packard1993}]
Let a matrix $M \in \Complexs^{n \times n}$ and $F$ block sizes $n_j$ such that $\sum_{j=1}^F n_j = n$ be given. The \emph{structured singular value} of $M$ with respect to the uncertainty structure
\[
\bDelta = \{\text{diag}(\Delta_1, \ldots, \Delta_F) : \Delta_i \in \Complexs^{n_j \times n_j}
\},
\]
is defined by
\begin{equation*}
\mu_\bDelta (M) \coloneqq
\frac{1}{
	\min \left\{ \sigmamax(\Delta) \,:\, \Delta \in \bDelta, \, \det(I - M\Delta) = 0 \right\} }
\label{eq:mu_definition}
\end{equation*}
unless $\det(I - \Delta M) \neq 0$ for all $\Delta \in \bDelta$, in which case $\mu_\bDelta(M) \coloneqq 0$.	
\end{definition}

With $\bm{D} = \big\{ d_1 I_{n_1}, \ldots , d_F I_{n_F}) \mid d_j \in \Reals, \, d_j>0 \big\}$
the following upper bound can be shown \cite{Packard1993}
\begin{equation}
\mu_\bDelta(M)  \leq \inf_{D \in \bm{D}} \sigmamax(D^{1/2}MD^{-1/2}) \eqqcolon  \nu_\bDelta(M).
\label{eq:nu_definition}
\end{equation}

The upper bound $\nu_\bDelta(M)$ can be computed by convex optimization \cite{Packard1993}.  The structured singular value $\mu_\bDelta(M)$, on the other hand,  is in general NP hard to compute.  However, for $F \leq 3$ it holds that 
$\mu_\bDelta(M) \!=\! \nu_\bDelta(M)$ for any $M \in \Complexs^{n \times n}$ \cite{Packard1993}.
In this letter we show that this equality holds for $F \leq 5$  if $M \in \Reals^{n \times n}$.

\subsection{\boldmath Conditions for $\nu_\bDelta(M) = \sigmamax(M)$}

If $\nu_\bDelta(M) = \sigmamax(M)$ then the infimum in \eqref{eq:nu_definition} is attained for $D = I_n$ and the matrix $M$ is said to be \emph{optimally $D$ scaled}. 
That a matrix $M$ is optimally $D$ scaled is equivalent to that the function $D \mapsto \sigmamax(D^{1/2}MD^{-1/2})$ lacks descent directions in the point $D=I_n$ \cite{Doyle1982}.
This ``lack of descent directions'' can be characterized from a singular value decomposition of $M$ \cite[Sec. 8]{Packard1993}. 
Let a singular value decomposition of $M$ be given by
\begin{equation}\label{eq:svd_of_M}
M = \sigma_1 UV^* + \widetilde{U} \widetilde{\Sigma} \widetilde{V}^*,
\end{equation}
where $U$ and $V$ are $n \times r$ matrices whose columns are the $r$ pairs of singular vectors that correspond to the largest singular value $\sigma_1 = \sigmamax(M)$.
\stepcounter{theorem}%
\edef\thmnbroptscaling{\thetheorem}%
Theorems 8.1 and 8.2 in \cite{Packard1993} can be combined into the following.

\vspace{0.5em}
\begin{manualtheorem}{\thmnbroptscaling}
$\!\nu_\bDelta(M) = \sigmamax(M)$ $\iff$\\
no $Z \in \{\, \text{diag}(z_1 I_{n_1}, \ldots, z_{F-1} I_{n_{F-1}}, 0_{n_F \times n_F}) \mid z_j \in \Reals \}$
satisfies $\lambda_\text{min}(U^*ZU - V^*ZV) > 0$.
\end{manualtheorem}

We will need a more geometric condition than the minimum-eigenvalue condition in Theorem 1. Let $U_j$ and $V_j$ be the $n_j \times r$ matrices that are given by the $n_j$ rows of $U$ and $V$ that correspond to the $j$th uncertainty block, that is
\begin{equation}
U = \bmat{U_1 \\ \vdots \\ U_F} , \qquad V = \bmat{V_1 \\ \vdots \\ V_F}.
\label{eq:UV_decomposition}
\end{equation}
For each (full) uncertainty block, define the Hermitian $r \times r$ matrix
\begin{equation}\label{eq:P_full}
P_j \coloneqq  U_j^* U_j - V_j^* V_j
\end{equation}
and let
\begin{equation}\label{eq:Pset_def}
\Pset \coloneqq \left\{ P_1, \ldots, P_{F-1} \right\}.
\end{equation}
Theorem 1 can now be formulated as follows%
\footnote{Note that, in Theorem 1, $U^* Z U - V^* Z V = \sum_{j=1}^{F-1} z_j P_j$.}\!.

\begin{manualtheorem}{\thmnbroptscaling H}\label{prop:nu_equiv_complex}
	\mbox{$\nu_\bDelta(M) = \sigmamax(M)
			\iff
			\HermPDCone{r} \cap \Span(\Pset)   = \emptyset$.}
\end{manualtheorem}

For our results on real matrices $M$ we need the following result that follows trivially from  Theorem~\ref{prop:nu_equiv_complex}.
\begin{manualtheorem}{\thmnbroptscaling S}\label{prop:nu_equiv_real}
	If all matrices in $\Pset$ are real then\\
	\hspace*{2.9cm}$\nu_\bDelta(M) = \sigmamax(M) 
	\iff
	\PDCone{r} \cap \Span(\Pset) = \emptyset.$
\end{manualtheorem}

\subsection{\boldmath Condition for $\mu_\bDelta(M) = \sigmamax(M)$}

The equality $\mu_\bDelta(M) = \sigmamax(M)$ is equivalent to that a certain system of quadratic equations in the matrices  $P_j$ in \eqref{eq:P_full} has a nontrivial solution
\cite[Thm. 8.3]{Packard1993}.

\begin{theorem}\label{prop:mu_equiv_doyle}
	$\mu_\bDelta(M) = \sigmamax(M)$ $\iff$\\[0.15em]
	there is a nonzero vector $\eta \in \Complexs^r$ such that
	
	\vspace{-0.8em}
	\begin{equation}
	\langle P, \eta\eta^* \rangle = \text{trace} (P \eta\eta^*) = \eta^* P \eta  = 0 \text{\,\, for all\,\,} P \in \Pset.
	\label{eq:trace_condition}
	\end{equation}
\end{theorem}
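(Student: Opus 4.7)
The plan is to interpret $\mu_\bDelta(M) = \sigmamax(M)$ operationally: by the definition of $\mu_\bDelta$, it is equivalent to the existence of a block-diagonal $\Delta \in \bDelta$ with $\sigmamax(\Delta) = 1/\sigma_1$ (where $\sigma_1 \coloneqq \sigmamax(M)$) together with a unit vector $x$ satisfying $M\Delta x = x$. The key tool will be the SVD \eqref{eq:svd_of_M}: it forces the chain of submultiplicative norm inequalities triggered by $M\Delta x = x$ to be tight, which pins down both $x$ and $\Delta x$ in terms of a single vector $\eta \in \Complexs^r$ living in the top singular subspace. The trace condition \eqref{eq:trace_condition} will then emerge as the obstruction-free condition on $\eta$ that lets each block $\Delta_j$ be built within the norm budget $1/\sigma_1$.

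For the forward direction, I start with such $\Delta$ and $x$ and analyze the chain $1 = \|x\| = \|M\Delta x\| \leq \sigma_1 \|\Delta x\| \leq \sigma_1 \sigmamax(\Delta) \|x\| = 1$, which must hold with equality throughout. Equality in $\|M y\| = \sigma_1 \|y\|$ for $y \coloneqq \Delta x$, combined with \eqref{eq:svd_of_M}, forces $y \in \text{range}(V)$, so write $\Delta x = V\eta$ for some $\eta \in \Complexs^r$; premultiplying by $M$ and using $MV = \sigma_1 U$ (since $V^*V = I_r$ and $\widetilde V^* V = 0$) gives $x = \sigma_1 U\eta$. Partitioning according to \eqref{eq:UV_decomposition} yields the block identities $x_j = \sigma_1 U_j \eta$ and $\Delta_j x_j = V_j \eta$, and the bound $\|\Delta_j\| \leq 1/\sigma_1$ forces $\|V_j\eta\| \leq \|U_j\eta\|$, i.e., $\eta^* P_j \eta \geq 0$ for every $j$. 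Summing over $j$ and using $\sum_{j=1}^F P_j = U^*U - V^*V = 0$ collapses the $F$ inequalities to equalities, which is exactly \eqref{eq:trace_condition}.

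For the reverse direction, given a nonzero $\eta$ with $\eta^* P_j \eta = 0$ for every $j \in \{1, \ldots, F-1\}$ (and hence also $j=F$ by the telescoping identity $\sum_j P_j = 0$), I would explicitly set $\Delta_j \coloneqq V_j \eta \eta^* U_j^* / (\sigma_1 \|U_j\eta\|^2)$ when $U_j \eta \neq 0$, and $\Delta_j \coloneqq 0$ otherwise; the latter is consistent because then $V_j \eta = 0$ as well. Each nonzero $\Delta_j$ has rank one and operator norm exactly $1/\sigma_1$, so $\Delta \coloneqq \text{diag}(\Delta_1, \ldots, \Delta_F) \in \bDelta$ satisfies $\sigmamax(\Delta) = 1/\sigma_1$ (at least one block is nonzero since $\|U\eta\| = \|\eta\| \neq 0$). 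A direct computation with $x \coloneqq \sigma_1 U\eta$ then gives $M\Delta x = \sigma_1 U V^* V \eta + \widetilde U \widetilde\Sigma \widetilde V^* V \eta = \sigma_1 U \eta = x$, so $\det(I - M\Delta) = 0$. This yields $\mu_\bDelta(M) \geq \sigma_1$, and combined with the trivial $\mu_\bDelta(M) \leq \sigmamax(M) = \sigma_1$ the claimed equality follows.

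The main delicate point I anticipate is extracting the correct parametrization from the equality case $\|M y\| = \sigma_1 \|y\|$ when $\sigma_1$ has multiplicity $r > 1$: this multiplicity is precisely what produces the $r$-dimensional freedom in $\eta$ (rather than a single scalar), and without it the block identities would over-determine $\Delta$. Handling degenerate blocks with $U_j \eta = 0$ is a cosmetic nuisance, but it is automatically resolved by the hypothesis $\eta^* P_j \eta = 0$, so no separate case analysis is really needed.
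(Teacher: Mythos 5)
The paper states this theorem by citation to \cite[Thm.~8.3]{Packard1993} and provides no proof of its own, so there is nothing internal to compare against. Your argument is correct and is essentially the standard Packard--Doyle proof reconstructed from scratch: the forward direction is the tightness of the chain $1 = \|x\| = \|M\Delta x\| \le \sigma_1\|\Delta x\| \le \sigma_1\sigmamax(\Delta)\|x\| = 1$, which pins $\Delta x = V\eta$, $x = \sigma_1 U\eta$, and turns the per-block norm bounds into $\eta^* P_j\eta \ge 0$ that collapse to equalities via $\sum_j P_j = U^*U - V^*V = 0$; the reverse direction is the rank-one block construction $\Delta_j = V_j\eta\eta^* U_j^*/(\sigma_1\|U_j\eta\|^2)$. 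Your handling of the degenerate blocks ($U_j\eta = 0 \Rightarrow V_j\eta = 0$), of the multiplicity $r>1$ that makes $\eta$ a vector rather than a scalar, and of the telescoping identity recovering the condition on $P_F$, are all exactly right; the only implicit step worth flagging is that attainment of the minimizing $\Delta$ in the definition of $\mu_\bDelta$ follows from coercivity of $\sigmamax$ over the closed feasible set, which you use but do not spell out.
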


\begin{remark*}
The condition $\norm{\eta} = 1$ in \cite[Thm. 8.3]{Packard1993} has without loss of generality been relaxed to nonzeroness of $\eta$.
\end{remark*}

\subsection{Low-Rank Solutions to Feasibility SDPs}
The positive results in this letter follow from the following theorem that states:
``Given a low-dimensional subspace $L$ of $\SymMats{r}$ that does not intersect the positive definite cone $\PDCone{r}$, then it is possible to find a nonzero low-rank positive semidefinite matrix orthogonal to $L$''.
Recall that $\LowRank_q(\PSDCone{r})$, with $q \leq r$, denotes the positive semidefinite $r \times r$ matrices of rank $\leq q$.
\stepcounter{theorem}
\edef\thmnbrlowrank{\thetheorem}
\begin{manualtheorem}{\thmnbrlowrank S}\label{thm:low_rank_sym}
	Let $L$ be a linear subspace of $\SymMats{r}$.
	If $\dim L  \leq   (q+1)(q+2)/2 - 2$ and $\PDCone{r} \cap L  = \emptyset$ then $ \LowRank_q(\PSDCone{r}) \cap L^\perp \neq \{0\}$.
\end{manualtheorem}

\begin{proof}
Follows from \cite[Sec.~2.2]{Barvinok1995} (using one constraint to ensure a nonzero solution) or from \cite[Thm. 6]{Hiriart2002}.
\end{proof}

\begin{remark*}
Barvinok's result in \cite{Barvinok1995} is essentially a consequence of the facial structure of the positive semidefinite cone $\PSDCone{r}$ \cite{Hiriart2002}.
Every face of $\PSDCone{r}$ is isomorphic to $\PSDCone{q}$ for some $q \leq r$, which is where the number $(q+1)(q+2)/2 = \dim \, \SymMats{{q+1}}$ in Theorem 1 comes from.
A generalization of Barvinok's result to convex cones in Euclidean spaces, of a form similar to Theorem~1, is given in \cite[Thm. 6]{Hiriart2002}.
\end{remark*}

For one of our results we need the following variation of Theorem~\ref{thm:low_rank_sym} which can be shown as in \cite{Barvinok1995}, or perhaps more directly from \cite[Thm. 6]{Hiriart2002}.
\begin{manualtheorem}{\thmnbrlowrank H}\label{thm:low_rank_herm}
	Let $L$ be a linear subspace of $\HermMats{r}$. If $\dim L \leq  (q+1)^2 -2$ and $  \HermPDCone{r} \cap L = \emptyset$ then $\LowRank_q(\HermPSDCone{r}) \cap L^\perp \neq \{0\}$.
\end{manualtheorem}

\section{New Results}\label{sec:new_results}

\begin{theorem}\label{thm:F5}
	If $M \in \Reals^{n \times n}$ and $F \leq 5$ then $\mu_\bDelta(M) = \nu_\bDelta(M)$.
\end{theorem}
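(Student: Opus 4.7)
The plan is to combine Theorem~\ref{prop:nu_equiv_real} (the geometric characterization of $\nu_\bDelta(M) = \sigmamax(M)$ in the real case), Theorem~\ref{thm:low_rank_sym} (Barvinok's low-rank feasibility result), and Theorem~\ref{prop:mu_equiv_doyle} (the quadratic-equation condition for $\mu_\bDelta(M) = \sigmamax(M)$). The core idea is to reduce to an optimally $D$-scaled $M$, use Theorem~\ref{thm:low_rank_sym} to extract a nonzero real rank-$\leq 2$ PSD matrix $X$ orthogonal to $\Pset$, and then complexify $X$ into a single vector $\eta \in \Complexs^r$ that witnesses the Doyle condition.

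First I would replace $M$ by $\tilde{M} \coloneqq D_\star^{1/2} M D_\star^{-1/2}$, where $D_\star \in \bm{D}$ attains the infimum in \eqref{eq:nu_definition}. Because $D_\star$ is real positive and commutes with every $\Delta \in \bDelta$, this substitution preserves both $\mu_\bDelta$ and $\nu_\bDelta$, keeps the matrix real, and makes $\nu_\bDelta(\tilde{M}) = \sigmamax(\tilde{M})$. Choosing a real SVD $\tilde{M} = U \Sigma V^\tran$ renders every $P_j = U_j^\tran U_j - V_j^\tran V_j$ real symmetric, so Theorem~\ref{prop:nu_equiv_real} yields $\PDCone{r} \cap \Span(\Pset) = \emptyset$.

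Since $|\Pset| = F - 1 \leq 4 = (2+1)(2+2)/2 - 2$, Theorem~\ref{thm:low_rank_sym} with $q = 2$ produces a nonzero $X \in \LowRank_2(\PSDCone{r})$ with $\langle X, P_j \rangle = 0$ for every $j$. Write $X = \eta_1 \eta_1^\tran + \eta_2 \eta_2^\tran$ with $\eta_1, \eta_2 \in \Reals^r$, set $\eta \coloneqq \eta_1 + i \eta_2 \in \Complexs^r$, and note that $\eta \neq 0$ because $X \neq 0$. Then $\eta \eta^* = X + i(\eta_2 \eta_1^\tran - \eta_1 \eta_2^\tran)$; the skew-symmetric imaginary part contributes zero trace against each symmetric real $P_j$, so $\eta^* P_j \eta = \langle P_j, \eta\eta^* \rangle = \langle P_j, X \rangle = 0$ for all $j$. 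Theorem~\ref{prop:mu_equiv_doyle} then delivers $\mu_\bDelta(M) = \sigmamax(M) = \nu_\bDelta(M)$.

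I expect the principal obstacle---and the essential reason the threshold jumps from the classical $F \leq 3$ to $F \leq 5$---to be the mismatch between what Theorem~\ref{thm:low_rank_sym} supplies (a rank-$\leq 2$ real PSD matrix) and what Theorem~\ref{prop:mu_equiv_doyle} demands (a rank-one \emph{complex} matrix $\eta \eta^*$). The skew-symmetry of $\Im(\eta \eta^*)$ bridges that gap for free precisely because the $P_j$ are real symmetric; were they merely Hermitian, one would have to invoke Theorem~\ref{thm:low_rank_herm} with $q = 1$, enforcing $(1+1)^2 - 2 = 2 \geq F - 1$ and recovering the original $F \leq 3$ bound.
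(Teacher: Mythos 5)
Your core argument is exactly the paper's Part~1: reduce to the optimally $D$-scaled case, apply Theorem~\ref{thm:low_rank_sym} with $q=2$ (noting $\dim\Span(\Pset) \le F-1 \le 4$), write the resulting rank-$\le 2$ real PSD matrix as $X = \eta_1\eta_1^\tran + \eta_2\eta_2^\tran = \Re\,\eta\eta^*$ with $\eta = \eta_1 + i\eta_2$, and use the realness and symmetry of the $P_j$ to kill the skew-symmetric imaginary part so that Theorem~\ref{prop:mu_equiv_doyle} applies. Your closing observation about why the Hermitian route only gives $F\le 3$ is also correct and matches the paper's later discussion.

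There is, however, a genuine gap in the reduction step. You write that you would ``replace $M$ by $\tilde M = D_\star^{1/2} M D_\star^{-1/2}$, where $D_\star \in \bm{D}$ attains the infimum in \eqref{eq:nu_definition},'' but the infimum in the definition of $\nu_\bDelta$ is over the \emph{open} set of positive scalings and need not be attained --- some $d_j$ can tend to $0$ or $\infty$. This is a well-known subtlety in $D$-scaling, and it is precisely the reason the paper splits its proof into two parts: Part~1 proves the result under the hypothesis that $M$ is already optimally $D$-scaled, and Part~2 handles arbitrary real $M$ by a limiting argument (following \cite[Thm.~8.4]{Packard1993}): one shows there exists an optimally $D$-scaled real matrix $W$ to which $M$ can be brought arbitrarily close by $D$-scaling, applies Part~1 to $W$, and then transfers the conclusion back to $M$ using the $D$-invariance of $\mu_\bDelta$ together with continuity of $\mu_\bDelta(\cdot)$ and $\sigmamax(\cdot)$. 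Your proposal as written silently assumes attainment, so it proves the theorem only for the (dense but proper) subset of matrices for which the $D$-scaling infimum is achieved; you would need to add the Part~2 limiting argument to cover the general case.
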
 

\begin{proof}
\emph{Part 1:}
We begin by showing that if $M$ is optimally $D$ scaled (i.e.,  $\nu_\bDelta (M) = \sigmamax(M)$) then $\nu_\bDelta(M) = \mu_\bDelta(M)$.

Assume that $M \in \Reals^{n \times n}$ satisfies $\nu_\bDelta (M) = \sigmamax(M)$.
Take a real singular value decomposition \eqref{eq:svd_of_M} of $M$ and let $\Pset$ be the set in \eqref{eq:Pset_def}. Note that the matrices in $\Pset$ are real and that $\dim \Span(\Pset) \leq F - 1 \leq 4$.
Working in $\SymMats{r}$, it follows from Theorem~\ref{prop:nu_equiv_real} and Theorem~\ref{thm:low_rank_sym} that there exists a nonzero $X \in \LowRank_2(\PSDCone{r}) \cap \Span(\Pset)^\perp $. 

\mbox{From $X \! \in \! \LowRank_2(\PSDCone{r})$ we get $X \!= \eta_\re \eta_\re^\tran + \eta_\im \eta_\im^\tran \!=\! \Re\, \eta\eta^*$} where $\eta = \eta_\re + i\eta_\im \in \Complexs^r$ is nonzero since $X$ is nonzero.
From $X \in \Span(\Pset)^\perp$ we get that $\langle P,  \Re\, \eta\eta^* \rangle = 0$ for all $P \in \Pset$.
This implies \eqref{eq:trace_condition} since all $P \in \Pset$ are real and $\langle P,  \eta\eta^* \rangle = \eta^* P \eta$ is \emph{always} real. Theorem~\ref{prop:mu_equiv_doyle} now gives that $\mu_\bDelta(M) = \sigmamax(M)$.

	\emph{Part 2:} Extending Part 1 to any $M \in \Reals^{n \times n}$ can be done as in the proof of \cite[Thm. 8.4]{Packard1993} if also realness is considered. We give an outline and refer to \cite{Packard1993} for details.
	
	Let $M$ be any matrix in $\Reals^{n \times n}$. It can be shown that there exists an optimally $D$ scaled matrix $W$ to which $M$ can be made arbitrarily close through $D$ scaling. Since the factors $D \in \bm{D}$ are real for full-block uncertainty, the matrix $W$ can be assumed to be real. Part 1 now gives that $\nu_\bDelta(W) = \mu_\bDelta(W)$.  Since $\mu_\bDelta(\cdot)$ is invariant under $D$ scaling, and both $\sigmamax(\cdot)$ and $\mu_\bDelta(\cdot)$ are continuous, it follows that $\nu_\bDelta(M) = \mu_\bDelta(M)$.
\end{proof}	

\begin{counterexample}[$M \in \Reals^{n \times n}$, $F=6$]\label{ce:F6}
Let $M = UV^\tran$ where
\[
U =
\frac{1}{2}
\bmat{
	1 & 1 & 0 \\
	1 & -1 & 0 \\
	1 & 0 & 1 \\
	1 & 0 & -1 \\
	0 & 1 & 1 \\
	0 & 1 & -1
},
\quad
V =
\frac{1}{\sqrt{2}}
\bmat{
	0 & 0 & 1 \\
	0 & 0 & 1 \\
	0 & 1 & 0 \\
	0 & 1 & 0 \\
	1 & 0 & 0 \\
	1 & 0 & 0 \\
	
},
\]
and let $\bDelta = \{\text{diag}(\Delta_1, \ldots, \Delta_6) \mid \Delta_j \in \Complexs\}$.

We have
\[
\setlength{\arraycolsep}{2.7pt}
\newcommand{\smin}{\scalebox{0.48}[1.0]{\( - \)}}
\Pset = \!\!
\scalebox{1}{
$
\left\{\!
\dfrac{1}{4}\!\!\bmat{1 & 1 & 0 \\ 1 & 1 & 0 \\ 0 & 0 & \smin2}\!,
\dfrac{1}{4}\!\bmat{1 & \smin1 & 0 \\ \smin1 & 1 & 0 \\ 0 & 0 & \smin2}\!,
\dfrac{1}{4}\!\bmat{1 & 0 & 1 \\ 0 & \smin2 & 0 \\ 1 & 0 & 1}\!,
\dfrac{1}{4}\!\bmat{1 & 0 & \smin1 \\ 0 & \smin2 & 0 \\ \smin1 & 0 & 1}\!,
\dfrac{1}{4}\!\bmat{\smin2 & 0 & 0 \\ 0 & 1 & 1 \\ 0 & 1 & 1}
\!
\right\}\!,
$
}
\]
and working in $\SymMats{r}$ it is easily verified that 
\begin{equation}\label{eq:span_ce_f6}
\Span({\Pset})^\perp = \Span(I_3).
\end{equation}
Since any matrix orthogonal to $I_3$ has diagonal elements that sum to zero, it follows that $\Span(\Pset)$ is disjoint from the positive definite cone $\PDCone{r}$.
Hence by Theorem~\ref{prop:nu_equiv_real} we have that $\nu_\bDelta(M) = \sigmamax(M) = 1$.

Assume that there is a nonzero $\eta = \eta_\re + i\eta_\im$ that satisfies \eqref{eq:trace_condition}. Since all elements of $\Pset$ are real, we then have that $\langle P,\, \Re \{ \eta\eta^* \} \rangle = 0$ for all $P \in \Pset$, or equivalently, that  $\Re \{ \eta\eta^* \} \in \Span(\Pset)^\perp$.
This contradicts \eqref{eq:span_ce_f6} since $\Re \{ \eta\eta^* \} = \eta_\re \eta_\re^\tran + \eta_\im \eta_\im^\tran$ has a rank of at most two.
Hence there is no nonzero $\eta$ satisfying \eqref{eq:trace_condition} and  Theorem~\ref{prop:mu_equiv_doyle} gives that $\mu_\bDelta(M)$ does not equal $\sigmamax(M) = \nu_\bDelta(M)$.\hfill\qed
\end{counterexample}

\section{Alternative Proofs of Known Results\\for Full-Block Uncertainty}
In the next two sections we show that the SDP approach  introduced in Sec.~\ref{sec:new_results} can be used for succinct derivations of several theorems and counterexamples in \cite[Sec.~9]{Packard1993}.

\begin{theorem}[{\cite[Sec 9.2]{Packard1993}}]
	Let $M \in \Complexs^{n \times n}$ and assume $F \leq 3$. Then $\mu_\bDelta(M) = \nu_\bDelta(M)$.
\end{theorem}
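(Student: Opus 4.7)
The plan is to replay the structure of the proof of Theorem~\ref{thm:F5}, but working in $\HermMats{r}$ instead of $\SymMats{r}$, and using the Hermitian low-rank result Theorem~\ref{thm:low_rank_herm} in place of Theorem~\ref{thm:low_rank_sym}. The key dimension count is that with $F \leq 3$ uncertainty blocks, $\dim \Span(\Pset) \leq F - 1 \leq 2$, which matches the bound $(q+1)^2 - 2 = 2$ in Theorem~\ref{thm:low_rank_herm} for the choice $q = 1$.

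First, I would prove the optimally $D$-scaled case: assume $\nu_\bDelta(M) = \sigmamax(M)$, form an SVD as in \eqref{eq:svd_of_M}, and build the set $\Pset \subset \HermMats{r}$ according to \eqref{eq:P_full}--\eqref{eq:Pset_def}. By Theorem~\ref{prop:nu_equiv_complex}, the subspace $\Span(\Pset)$ is disjoint from $\HermPDCone{r}$. Since $\dim \Span(\Pset) \leq 2 = (1+1)^2 - 2$, Theorem~\ref{thm:low_rank_herm} with $q = 1$ produces a nonzero $X \in \LowRank_1(\HermPSDCone{r}) \cap \Span(\Pset)^\perp$. A rank-one Hermitian PSD matrix has the form $X = \eta\eta^*$ for some nonzero $\eta \in \Complexs^r$, and the orthogonality condition reads $\langle P, \eta\eta^*\rangle = \eta^* P \eta = 0$ for every $P \in \Pset$. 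Applying Theorem~\ref{prop:mu_equiv_doyle} then yields $\mu_\bDelta(M) = \sigmamax(M) = \nu_\bDelta(M)$.

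For the general case, I would lift the optimally-$D$-scaled result by the same approximation/continuity argument sketched in Part~2 of the proof of Theorem~\ref{thm:F5}: pick a sequence of $D$-scalings that brings $M$ arbitrarily close to an optimally $D$-scaled matrix $W$, invoke Part~1 to get $\mu_\bDelta(W) = \nu_\bDelta(W)$, and use invariance of $\mu_\bDelta$ under $D$-scaling together with continuity of $\sigmamax(\cdot)$ and $\mu_\bDelta(\cdot)$ to conclude $\mu_\bDelta(M) = \nu_\bDelta(M)$. No ``realness'' preservation is needed here, which actually makes this step easier than in Theorem~\ref{thm:F5}.

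The main obstacle I anticipate is purely a bookkeeping one: making sure the right version of the low-rank SDP theorem (Hermitian rather than symmetric) is invoked, and that the dimension inequality $F - 1 \leq (q+1)^2 - 2$ is matched correctly for $q = 1$. Unlike the real case, there is no extra factor of two from splitting into real and imaginary parts, so one can directly produce a rank-one solution $\eta\eta^*$ and read off $\eta$ without any additional argument that $\eta^* P \eta$ is real; this is automatic from Hermiticity of $P$.
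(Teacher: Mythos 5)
Your proposal matches the paper's proof essentially line for line: work in $\HermMats{r}$, bound $\dim\Span(\Pset) \leq F-1 \leq 2 = (1+1)^2 - 2$, invoke Theorem~\ref{prop:nu_equiv_complex} and Theorem~\ref{thm:low_rank_herm} with $q=1$ to get a rank-one $X = \eta\eta^* \in \Span(\Pset)^\perp$, apply Theorem~\ref{prop:mu_equiv_doyle}, and handle the non-optimally-scaled case by the same approximation argument. Your observation that the realness bookkeeping of Theorem~\ref{thm:F5} drops out here is also accurate.
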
 

\begin{proof}
	Assume that $\nu_\bDelta (M) = \sigmamax(M)$.
	Let $\Pset$ be the set in \eqref{eq:Pset_def} and note that $\dim \Span(\Pset) \leq F - 1 \leq 2$.
	Working in $\HermMats{r}$, it follows from Theorem~\ref{prop:nu_equiv_complex} and Theorem~\ref{thm:low_rank_herm} that there exists a nonzero $X \in \LowRank_1(\HermPSDCone{r})  \cap \Span(\Pset)^\perp$.  
	Hence there is a nonzero $\eta \in \Complexs^r$ such that $\eta \eta^* = X \in \Span(\Pset)^\perp$. Theorem~\ref{prop:mu_equiv_doyle} now gives that $\mu_\bDelta(M) = \sigmamax(M)$. This shows that $\mu_\bDelta(M) = \nu_\bDelta(M)$ if $M$ is optimally $D$ scaled. The extension to arbitrary $M$ can be done as in the proof of Theorem~\ref{thm:F5} or \cite[Thm. 8.4]{Packard1993}.
\end{proof}	

\begin{counterexample}[$M \in \Complexs^{n \times n}$, $F=4$]\label{ce:F4}
	Consider $M = UV^*$ with
	\[
	U =
	\frac{1}{2}
	\bmat{
		1 & 0 \\
		1 & 1 \\
		1 & i \\
		1 & -1-i
	},
	\quad
	V =
	\frac{1}{2}
	\bmat{
		0 	 &  1\\
		1 	 & -1\\
		1    & -i\\
		1-i & 1
	},
	\]
	and 
	$\bDelta = \{\text{diag}(\Delta_1, \ldots, \Delta_4) \mid \Delta_j \in \Complexs\}$, which essentially is Morton and Doyle's classic counterexample \cite[Sec.~9.3]{Packard1993}.
	Working in $\HermMats{2}$, it can be verified that
	\begin{equation}
	\Span(\Pset)^\perp = \Span(I_2).
	\label{eq:span_morton_and_doyle}
	\end{equation}
	This together with Theorem~\ref{prop:nu_equiv_complex} gives that $\nu_\bDelta(M) = \sigmamax(M)$. From \eqref{eq:span_morton_and_doyle} it also follows that there is no nonzero $\eta \in \Complexs^2$ such that $\eta\eta^* \in \Span(\Pset)^\perp$ and 
	by Theorem~\ref{prop:mu_equiv_doyle} we have that $\mu_\bDelta(M) < \sigmamax(M) = \nu_\bDelta(M)$.\qed
\end{counterexample}

\begin{theorem}[{\cite[Sec.~9.7]{Packard1993}}]
Let $M \in \Reals^{n \times n}$ and assume $F \leq 2$.
Then, the smallest perturbation $\Delta \in \bDelta$ that makes $I - \Delta M$ singular can be taken as real. 
\end{theorem}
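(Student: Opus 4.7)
The plan is to adapt Part~1 of the proof of Theorem~\ref{thm:F5}, using a rank-1 rather than a rank-2 low-rank witness so that the resulting $\eta$ is real; the standard construction of a worst-case perturbation $\Delta$ from $\eta$, $U$, and $V$ is then automatically real as well.

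First I would assume $M$ is optimally $D$-scaled, so that $\nu_\bDelta(M) = \sigmamax(M)$. Taking a real SVD \eqref{eq:svd_of_M} and forming $\Pset$ as in \eqref{eq:Pset_def}, the matrices $P_j$ are real and $\dim \Span(\Pset) \leq F - 1 \leq 1$. Applying Theorem~\ref{prop:nu_equiv_real} together with Theorem~\ref{thm:low_rank_sym} with $q=1$ (noting $(1+1)(1+2)/2 - 2 = 1$), I obtain a nonzero rank-1 real PSD matrix $X = \eta\eta^\tran \in \Span(\Pset)^\perp$ with some $\eta \in \Reals^r \setminus \{0\}$, hence $\eta^\tran P_j \eta = 0$ for $j = 1,\ldots,F-1$. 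Since $\sum_{j=1}^F P_j = U^* U - V^* V = 0$, the same equality extends to $j = F$. The construction in the proof of \cite[Thm.~8.3]{Packard1993}---setting $\alpha_j = U_j \eta$, $\beta_j = V_j \eta$ (both real and of equal norm by the above), and $\Delta_j = \beta_j \alpha_j^\tran / (\sigmamax(M) \, \|\alpha_j\|^2)$ when $\alpha_j \neq 0$ and $\Delta_j = 0$ otherwise---then yields a real block-diagonal $\Delta$ with $\sigmamax(\Delta) = 1/\sigmamax(M) = 1/\mu_\bDelta(M)$ for which $U\eta$ is a fixed point of $M\Delta$, so $\det(I - M\Delta) = 0$.

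Extension to an arbitrary $M \in \Reals^{n \times n}$ then mirrors Part~2 of the proof of Theorem~\ref{thm:F5}. Pick a real $D \in \bm{D}$ making $W = D^{1/2} M D^{-1/2}$ optimally $D$-scaled (and real), apply the construction above to get a real worst-case $\Delta^*$ for $W$, and observe that $\Delta^*$ commutes with the block-scalar $D$, so that $I - W\Delta^* = D^{1/2}(I - M \Delta^*) D^{-1/2}$ and both $\det(I - M\Delta^*) = 0$ and $\sigmamax(\Delta^*) = 1/\mu_\bDelta(W) = 1/\mu_\bDelta(M)$ follow; hence $\Delta^*$ is a real worst-case for $M$ itself.

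The main obstacle, just as for Theorem~\ref{thm:F5}, is that the infimum defining $\nu_\bDelta(M)$ need not be attained exactly in $\bm{D}$; one then has to resort to a limiting argument, using continuity of $\sigmamax$ and $\det$ together with compactness of $\{\Delta \in \bDelta : \sigmamax(\Delta) \leq 1/\mu_\bDelta(M)\}$ to extract a convergent subsequence of real candidate perturbations whose limit is a real worst-case for $M$.
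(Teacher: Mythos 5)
Your core argument matches the paper's exactly: assume $M$ is optimally $D$-scaled, take a real SVD, note $\dim\Span(\Pset)\leq 1$, invoke Theorem~\ref{prop:nu_equiv_real} and Theorem~\ref{thm:low_rank_sym} with $q=1$ to obtain a nonzero rank-one real PSD matrix $\eta\eta^\tran\in\Span(\Pset)^\perp$, and then build the real worst-case $\Delta$ from $\eta$ via the construction in \cite[Thm.~8.3]{Packard1993}. You have also correctly filled in details the paper leaves implicit: the normalization $\|U_j\eta\|=\|V_j\eta\|$ coming from $\eta^\tran P_j\eta=0$, and the observation $\sum_j P_j=U^\tran U-V^\tran V=0$ so that the balance condition holds for the $F$th block as well. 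That part is fine and is the paper's proof.

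Where you go beyond the paper is the extension to non-optimally-scaled $M$; the paper simply stops after the optimally-scaled case. Your commutation argument for an attained optimal $D$ is correct ($D$ commutes with block-diagonal $\Delta^*$, so $I-W\Delta^*=D^{1/2}(I-M\Delta^*)D^{-1/2}$ and the worst-case transfers). However, the final paragraph's proposed repair for the unattained case is not actually a workable plan as stated. Compactness of $\{\Delta:\sigmamax(\Delta)\leq 1/\mu_\bDelta(M)\}$ lets you extract subsequences, but there is nothing to extract a subsequence \emph{of}: you have only constructed a single real $\Delta^*$ at the limit matrix $W=\lim_k D_k^{1/2}MD_k^{-1/2}$, not a sequence of real worst-case perturbations $\Delta_k$ for the non-optimally-scaled iterates $W_k$, and when the minimizing $D_k$ diverge, $W$ is generally not any $D$-scaling of $M$, so the commutation trick does not carry $\Delta^*$ back to $M$ either. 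To close the argument you would need the degeneracy analysis from the proof of \cite[Thm.~8.4]{Packard1993} (tracking which scalings blow up and reducing to a smaller problem), which is more delicate than the continuity-plus-compactness sketch suggests. Since the paper itself elides this step, the omission does not contradict the paper, but the sketch as written overstates how routine the limiting argument is.
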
 

\begin{proof}
Assume that $\nu_\bDelta (M) = \sigmamax(M)$.
Let $\Pset$ be the set defined in \eqref{eq:Pset_def} and note that $\dim \Span(\Pset) \leq F-1 \leq 1$.
Working in $\SymMats{r}$, it follows from Theorem~\ref{prop:nu_equiv_real} and Theorem~\ref{thm:low_rank_sym} that there exists a nonzero $X \in \LowRank_{1}(\PSDCone{r}) \cap \Span(\Pset)^\perp$.
Hence there is a nonzero
$\eta \in \Reals^r$ such that $\eta\eta^\tran \in \Span(\Pset)^\perp$.
This implies that $\eta^\tran P \eta = \langle P, \eta\eta^\tran \rangle = 0$ for all $P \in \Pset$.
As in the proof of Theorem 8.3 in \cite{Packard1993}, a real perturbation $\Delta$ can be constructed from this $\eta$.
\end{proof}

\begin{counterexample}[$M \in \Reals^{n \times n}$,  $F=3$]\label{ce:MRF3}
Consider $M = U^\tran V$ with
\[
U =
\frac{1}{2}
\bmat{
	\sqrt{2} & 0 \\
	1 & \sqrt{2} \\
	1 & -\sqrt{2}
},
\quad
V =
\frac{1}{2}
\bmat{0 &  \sqrt{2} \\
	  \sqrt{2}	 &  -1\\
	  \sqrt{2} 	 & 1\\
},
\]
and $\bDelta = \{\text{diag}(\Delta_1, \Delta_2, \Delta_3) \mid \Delta_j \in \Complexs\}$ \footnote{This is one instance of the counterexamples in
\cite[Sec 9.8]{Packard1993}.
Minor changes to $U$ and $V$ were made for consistency.}.
It is easily verified that any real matrix orthogonal to the matrices  in $\Pset$ is a multiple of $I_2$.
This shows that no nonzero $\eta \in \Reals^2$ satisfies \eqref{eq:trace_condition}, and hence the smallest perturbation that makes $I - M\Delta$ singular cannot be real valued.\hfill\qed
\end{counterexample}

\section{Alternative Counterexamples for Repeated Scalar Uncertainty}
\label{sec:repeated_scalar_blocks}
The approach to the counterexamples in the previous sections can also be used for uncertainty structures $\bDelta$ that in addition to full blocks include \emph{repeated scalar} blocks $\delta_j I_{n_j}$ where $\delta_j \in \Complexs$.
As we will show, this enables a unified treatment of the counterexamples in \cite[Secs. 9.5, 9.6, 9.9]{Packard1993}.
See \cite{Packard1993} for further details on scalar uncertainty.

In the case of repeated scalar uncertainty there are $S+F$ blocks in \eqref{eq:UV_decomposition}, with the first $S$ blocks corresponding to repeated scalar uncertainty.
For each repeated scalar block, we define the following set of $r \times r$ matrices 
\begin{align}
\Pset_j  \coloneqq & \left\{ U_j^* E_{k\ell} U_j - V_j^* E_{k\ell} V_j \mid 1 \leq k \leq \ell \leq r \right\} \notag \\
& \cup \left\{ U_j^* F_{k\ell} U_j - V_j^* F_{k\ell} V_j \mid 1 \leq k < \ell \leq r \right\}
\end{align}
where $E_{k\ell}$ is the $n_j \times n_j$ matrix with ones at positions $\left\{ (k,\ell),\,(\ell, k) \right\}$ and zeros elsewhere, and $F_{k\ell}$ is the $n_j \times n_j$ matrix with $i$ at position $(k,\ell)$, $-i$ at position $(\ell, k)$, and zeros elsewhere. With the matrix $\Pset$ in \eqref{eq:Pset_def} changed to
\begin{equation}
\Pset = \Pset_1 \cup \cdots \cup \Pset_S  \cup \left\{ P_{S+1}, \ldots, P_{S+F-1} \right\}
\label{eq:Pset_alt_def}
\end{equation}
it can be verified\footnote{Note that $\Span(\Pset_j) = \{ U_j^* Z U_j - V_j^* Z V_j \mid Z \in \HermMats{{n_j}} \}$.} that Theorems \ref{prop:nu_equiv_complex} and \ref{prop:mu_equiv_doyle} also hold for repeated scalar blocks with $\Pset$ defined as in \eqref{eq:Pset_alt_def}.

\begin{counterexample}[$S=1$, $F=2$]\label{ce:S1F2}
Consider $M = UV^\tran$ with
\[
U =
\frac{1}{\sqrt{3}}
\bmat{
	1 & -1 \\
	1 & 1 \\
	1 & 0 \\
	0 & 1
},
\quad
V =
\frac{1}{\sqrt{3}}
\bmat{
	1 & 1 \\
	1 & -1 \\
	0 & 1 \\
	1 & 0 
},
\]
and $\bDelta = \{\text{diag}(\delta_1 I_2, \Delta_1, \Delta_2) \mid \delta_1, \Delta_1,\Delta_2 \in \Complexs\}$. It can be verified that $\Span(\Pset)^\perp = \Span(I_2)$ and as in Counterexample~\ref{ce:F4} it follows that $\mu_\bDelta(M) < \nu_\bDelta(M)$.\qed
\end{counterexample}

\begin{counterexample}[$S=2$, $F=0$]
Consider $M$, $U$, and $V$ as in Counterexample~\ref{ce:S1F2} and 
$\bDelta = \{\text{diag}(\delta_1 I_2, \delta_2 I_2) \mid \delta_1, \delta_2 \in \Complexs\}$, we once again get $\Span(\Pset)^\perp \!= \Span(I_2)$, and hence $\mu_\bDelta(M) \!<\! \nu_\bDelta(M)$.\qed
\end{counterexample}

\begin{counterexample}[$M \in \Reals^{n \times n}$, $S=1$, $F=1$]
Consider $M = UV^\tran$ with
\[
U =
\frac{1}{\sqrt{2}}
\bmat{
	1 & -1 \\
	1 & 1 \\
	0 & 0
},
\quad
V =
\frac{1}{\sqrt{2}}
\bmat{
	1 & 1 \\
	-1 & 1 \\
	0 &  0
},
\]
and $\bDelta \!=\! \{\text{diag}(\delta_1 I_2, \Delta_1) \mid \delta_1, \Delta_1 \!\in\! \Complexs\}$. Reasoning as in Counterexample~\ref{ce:MRF3} shows that the smallest perturbation that makes $I - M\Delta$ singular cannot be real valued. \qed
\end{counterexample}

\section{Conclusion}
It has been shown that the structured singular value of a real matrix with respect to five full complex uncertainty blocks equals its convex upper bound. A counterexample was provided in the case of six uncertainty blocks.

\section*{Acknowledgment}
The author thanks Anders Rantzer, Bo Bernhardsson, and the anonymous reviewers for helpful comments and suggestions.
The characterization of rank-two matrices in Sec.~\ref{sec:new_results} was proposed by Mattias Fält.

\printbibliography

\end{document}